\documentclass{amsart}

\usepackage{mathpazo}
\usepackage{amsmath,amssymb,amscd,color}
\usepackage{mathrsfs}
\usepackage{float}
\usepackage{tikz-cd}
\usetikzlibrary{positioning}
\usetikzlibrary{decorations.markings}
\usetikzlibrary{arrows, scopes}
\tikzcdset{arrow style=tikz, diagrams={>=stealth}, every label/.append style = {font = \normalsize}}
\usepackage{comment}
\setcounter{MaxMatrixCols}{20}
\definecolor{green}{RGB}{0,144,0}
\definecolor{bluegreen}{RGB}{17,100,180}
\usepackage[linkcolor = bluegreen, citecolor = bluegreen, colorlinks = True]{hyperref}
\numberwithin{equation}{section}
\numberwithin{figure}{section}
\numberwithin{table}{section}
\usepackage[all]{hypcap}
\usepackage{mathtools}

\tikzcdset{arrows={line width=1pt, line cap = round}}

\newtheorem{theorem}{Theorem}[section]
\newtheorem*{theorem*}{Theorem}
\newtheorem{conjecture}[theorem]{Conjecture}

\newtheorem{proposition}[theorem]{Proposition}

\newtheorem{question}[theorem]{Question}




\newcommand{\calH}{\mathcal{H}}
\newcommand{\T}{\mathbb{T}}
\newcommand{\C}{\mathbb{C}}
\newcommand{\Z}{\mathbb{Z}}
\newcommand{\R}{\mathbb{R}}
\DeclareMathOperator{\SL}{\text{SL}}
\DeclareMathOperator{\Sym}{\text{Sym}}
\DeclareMathOperator{\Alt}{\text{Alt}}

\title{Non-planarity of $\SL(2,\Z)$-orbits of origamis in $\mathcal{H}(2)$}

\author{Luke Jeffreys}
\address{School of Mathematics , University of Bristol, Fry Building, Woodland Road, Bristol BS8 1UG}
\curraddr{}
\email{luke.jeffreys@bristol.ac.uk}
\thanks{The first author is thankful for support from the Heilbronn Institute for Mathematical Research}

\author{Carlos Matheus}
\address{Centre de Math{\'e}matiques Laurent Schwartz, {\'E}cole Polytechnique, 91128 Palaiseau Cedex, France}
\curraddr{}
\email{carlos.matheus@math.cnrs.fr}

\date{}

\subjclass[2020]{Primary: 32G15, 30F30, 30F60. Secondary: 05C10.}

\begin{document}

\begin{abstract}
We consider the $\SL(2,\Z)$-orbits of primitive $n$-squared origamis in the stratum $\calH(2)$. In particular, we consider the 4-valent graphs obtained from the action of $\SL(2,\Z)$ with respect to a generating set of size two. We prove that, apart from the orbit for $n = 3$ and one of the orbits for $n = 5$, all of the obtained graphs are non-planar. Specifically, in each of the graphs we exhibit a $K_{3,3}$ minor, where $K_{3,3}$ is the complete bipartite graph on two sets of three vertices.
\end{abstract}

\maketitle


\section{Introduction}

An origami is an orientable connected surface obtained from a finite collection of unit squares in $\R^{2}$ by identifying by translation left-hand sides with right-hand sides and top sides with bottom sides. Equivalently, an origami is a connected Riemann surface $X$ equipped with a holomorphic one-form $\omega$ such that $X$ is obtained as a finite cover $\pi:X\to \T^{2} := \C/(\Z+i\Z)$ branched only at the origin $\bf{0}\in \T^{2}$ and $\omega = \pi^{*}(dz)$.

Origamis are a special case of a more general object called a translation surface -- a connected Riemann surface $X$ equipped with a non-zero holomorphic one-form $\omega$. The moduli space of all translation surfaces of a fixed genus $g$ is denoted by $\calH_{g}$. By the Riemann-Roch Theorem, the orders of the zeros of a holomorphic one-form on a genus $g$ Riemann surface must sum to $2g-2$. As such, the moduli space $\calH_{g}$ is naturally stratified as
\[\calH_{g} = \bigsqcup_{\substack{(k_{1},\ldots,k_{s}),\,\,\, k_{i}\geq 1  \\ \sum k_{i} = 2g-2}} \calH(k_{1},\ldots,k_{s}),\]
where the stratum $\calH(k_{1},\ldots,k_{s})$ is the subset of $\calH_{g}$ consisting of those translation surfaces $(X,\omega)$ where $\omega$ has $s$ zeros of orders $k_{1},\ldots,k_{s}$. Each stratum is a complex orbifold of dimension $2g + s - 1$, and in a natural coordinate system on these strata origamis can be thought of as the integer lattice points. This point of view was utilised by Eskin-Okounkov~\cite{EO} and Zorich~\cite{Z1} to calculate the volumes of strata with respect to a natural measure. For more details on translation surfaces and their applications, we direct the reader to the surveys of Forni and the second author~\cite{FM}, Yoccoz~\cite{Y}, and Zorich~\cite{Z2}.

Each stratum of translation surfaces admits a natural action of $\SL(2,\R)$ which restricts to an action of $\SL(2,\Z)$ on the origamis in that stratum. In this paper, we consider the $\SL(2,\Z)$-orbits of primitive $n$-squared origamis in the stratum $\calH(2)$. These orbits were classified in the works of McMullen~\cite{M} and Hubert-Leli{\`e}vre~\cite{HL}. We note that an origami in $\calH(2)$ requires at least 3 squares. For $n = 3$ or $n\geq 4$ even there is a single $\SL(2,\Z)$-orbit of primitive $n$-squared origamis in $\calH(2)$. For $n \geq 5$ odd, there are two $\SL(2,\Z)$-orbits of primitive $n$-squared origamis in $\calH(2)$. These two orbits, defined more explicitly in Subsection~\ref{s:orbits}, are called the A- and B-orbits, respectively. By considering $\SL(2,\Z) = \langle T, S\rangle$, where
\[T = \begin{bmatrix}
1 & 1 \\
0 & 1
\end{bmatrix}
\,\,\,\text{and}\,\,\,
S = \begin{bmatrix}
1 & 0 \\
1 & 1
\end{bmatrix},\]
each $\SL(2,\Z)$-orbit can be realised as a finite 4-valent graph which we will denote by $\mathcal{G}_{n}$ for $n = 3$ or $n\geq 4$ even, and for $n\geq 5$ odd by $\mathcal{G}_{n}^{A}$, or $\mathcal{G}_{n}^{B}$ for the A- and B-orbits, respectively.

It is a conjecture of McMullen that this family of graphs forms a family of expanders. Here we prove that, apart from $\mathcal{G}_{3}$ and $\mathcal{G}_{5}^{B}$, these graphs are non-planar. That is, our main result is the following.

\begin{theorem}\label{t:main}
The graphs $\mathcal{G}_{n}$, $\mathcal{G}_{n}^{A}$, and $\mathcal{G}_{n}^{B}$ are all non-planar with the exception of $\mathcal{G}_{3}$ and $\mathcal{G}_{5}^{B}$.
\end{theorem}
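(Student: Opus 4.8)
The strategy is dictated by the abstract: by Wagner's characterization of planarity (a graph is planar if and only if it has neither a $K_5$ nor a $K_{3,3}$ minor), it suffices to exhibit a $K_{3,3}$ minor in each of the graphs $\mathcal{G}_{n}$, $\mathcal{G}_{n}^{A}$, $\mathcal{G}_{n}^{B}$ outside the two listed exceptions. Recall that such a minor is given by six pairwise-disjoint connected branch sets $B_{1},B_{2},B_{3}$ and $C_{1},C_{2},C_{3}$ of vertices so that for every $i,j\in\{1,2,3\}$ there is at least one edge joining $B_{i}$ to $C_{j}$; producing such a configuration certifies non-planarity. The plan is therefore to (i) fix a workable combinatorial model for the $\SL(2,\Z)$-action, (ii) produce these branch sets explicitly and uniformly in $n$, and (iii) treat the small cases, isolating $\mathcal{G}_{3}$ and $\mathcal{G}_{5}^{B}$ as genuinely planar.

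First I would recall the Hubert--Leli{\`e}vre description of origamis in $\calH(2)$ as pairs of permutations $(h,v)$ (the horizontal and vertical gluings) generating a transitive subgroup of $\Sym(n)$ whose commutator $[h,v]$ is a single $3$-cycle, together with the representatives singling out the A- and B-orbits in Subsection~\ref{s:orbits}. Writing $\SL(2,\Z)=\langle T,S\rangle$, the generator $T$ acts by the horizontal shear $(h,v)\mapsto(h,vh^{-1})$ and $S$ by the vertical shear $(h,v)\mapsto(hv^{-1},v)$ (up to the conventions fixed earlier). The crucial structural point is that each of $\mathcal{G}_{n}$, $\mathcal{G}_{n}^{A}$, $\mathcal{G}_{n}^{B}$ decomposes as a union of the cycles traced out by iterating $T$ and the cycles traced out by iterating $S$: every vertex lies on exactly one $T$-cycle and one $S$-cycle, and the $4$-valence comes from having two neighbours along each. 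This is the combinatorial skeleton I would use to locate the minor.

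Next I would build the $K_{3,3}$ minor. Because a minor permits both edge-contraction and vertex-deletion, the long $T$- and $S$-cycles are ideal branch-set material: I would take the $B_{i}$ and $C_{j}$ to be suitably chosen arcs of these cycles, each contracted to a single vertex. Concretely, I would seek three $T$-cycles and three $S$-cycles in mutual ``general position'', so that each chosen $S$-cycle-arc meets each chosen $T$-cycle-arc; contracting the arcs then realizes the nine required $B_{i}$--$C_{j}$ adjacencies. The points to verify are the disjointness of the six arcs and the presence of all nine crossing edges, both of which I expect to follow from the explicit shear formulas once a scalable family of representatives (one pattern for even $n$, one for each odd-orbit type) is chosen. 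Since the combinatorial data grows with $n$, the construction must be written so that the same local pattern of crossings persists for every sufficiently large $n$; producing such an $n$-uniform pattern, rather than a case-by-case one, is where I expect the real work to lie.

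Finally, I would dispatch the remaining finitely many small values of $n$ by computing the graphs directly from the shear formulas: for each such graph I would either locate a $K_{3,3}$ minor by a short search or, for $\mathcal{G}_{3}$ and $\mathcal{G}_{5}^{B}$, exhibit an explicit planar embedding, confirming that these two are the only exceptions. The main obstacle, as noted, is the uniform-in-$n$ construction of the branch sets; the auxiliary difficulties are bookkeeping the shear action on the chosen representatives and checking that the branch-set arcs never collide as $n$ varies.
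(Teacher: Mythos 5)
Your overall strategy coincides with the paper's: invoke the Wagner--Kuratowski characterisation, exhibit a $K_{3,3}$ minor uniformly in $n$ for each of the three families, and give explicit planar embeddings of $\mathcal{G}_{3}$ and $\mathcal{G}_{5}^{B}$. However, as written the proposal has a genuine gap: the entire mathematical content of the theorem is the explicit, verified construction of the minor, and this is precisely the step you defer (``where I expect the real work to lie''). The paper carries it out by choosing, for each of the three cases (even $n$, A-orbits, B-orbits), six explicit origamis $O_{1},\dots,O_{6}$ written as permutation pairs, declaring nine connecting paths as explicit words in $T^{\pm 1}$ and $S^{\pm 1}$ (so the minor is realised as a $K_{3,3}$ subdivision with the $O_{i}$ as branch vertices), and then verifying by direct computation with the shear formulas that the nine paths are internally disjoint. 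The delicate points --- which your outline does not anticipate --- are the long paths that traverse most of a $T$-orbit (e.g.\ $T^{n-3}$ and $T^{-(n-4)}\circ S^{-1}$ in the even case, and $T\circ S^{-1}\circ T^{-(n-3)}\circ S^{-1}$ in the B case), where one must check that the direction of travel around the $T$-cycle is forced in order to avoid vertices already consumed by other paths.

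There is also a specific flaw in your proposed branch-set construction. You suggest taking the three $B_{i}$ to be arcs of $T$-cycles and the three $C_{j}$ to be arcs of $S$-cycles, ``so that each chosen $S$-cycle-arc meets each chosen $T$-cycle-arc.'' But branch sets of a minor must be pairwise \emph{disjoint}, and the natural way a $T$-cycle and an $S$-cycle interact in these graphs is by sharing vertices (every vertex lies on one of each), not by being joined by edges. If ``meets'' means ``shares a vertex,'' the configuration is not a minor; if it means ``is joined by an edge,'' then that edge is itself a $T$- or $S$-edge lying on some $T$- or $S$-cycle, and arranging nine such adjacencies between six pairwise disjoint arcs is exactly the combinatorial problem you have not solved. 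So the proposal should be regarded as a correct high-level plan whose execution --- the choice of representatives and the disjointness bookkeeping --- still constitutes essentially all of the proof.
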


This provides indirect evidence for McMullen's conjecture. Indeed, it follows from the separator theorem for planar graphs of Lipton-Tarjan~\cite{LT} that planar graphs cannot form an expander family. We direct the reader to the exposition of this fact in de Courcy-Ireland's recent work~\cite[Section 5]{dCI} demonstrating that, for certain primes $p$, Markoff graphs modulo $p$ -- another conjectured expander family -- are non-planar.

Our proof relies on the theorem of Wagner~\cite{W} and Kuratowski~\cite{K} characterising planar graphs in terms of forbidden minors. Recall that a graph $H$ is realised as a minor inside a graph $G$ if one can perform a sequence of edge-contractions, edge-deletions, and deletions of isolated vertices in order to transform the graph $G$ into the graph $H$. Wagner and Kuratowski's characterisation can then be worded as follows: a graph $G$ is planar if and only if it does not contain a $K_5$ or $K_{3,3}$ minor, where $K_{5}$ is the complete graph on five vertices and $K_{3,3}$ is the complete bipartite graph on two sets of three vertices. We prove the non-planarity claim of Theorem~\ref{t:main} by realising a $K_{3,3}$ minor in each case.

The planar graphs $\mathcal{G}_{3}$ and $\mathcal{G}_{5}^{B}$ are shown in Figures~\ref{f:G3} and~\ref{f:G5B} at the end of Section~\ref{s:minors}.

\subsection*{Weierstrass curves and an alternate generating set}

A Teichm{\"u}ller curve is an algebraic and isometric immersion of a finite-volume hyperbolic Riemann surface into the moduli space $\mathcal{M}_{g}$ of Riemann surfaces of genus $g$. McMullen~\cite{M} classified all of the Teichm{\"u}ller curves in genus two. The main source of such Teichm{\"u}ller curves are the so called Weierstrass curves $W_{D}$ parameterised by integers $D\geq 5$ with $D\equiv 0$ or $1$ modulo $4$. These curves consist of those Riemann surfaces $X\in\mathcal{M}_{2}$ whose Jacobians admit real multiplication by the quadratic order $\mathcal{O}_{D} := \Z[x]/\langle x^{2}+bx+c\rangle$, $b,c\in\Z$ with $D = b^{2}-c$, and for which there exists a holomorphic one-form $\omega$ on $X$ such that $(X,\omega)\in\calH(2)$ and $\mathcal{O}_{D}\cdot\omega\subset\C\cdot\omega$. A translation surface $(X,\omega)\in\calH(2)$ projects to $W_{n^{2}}$ if and only if $(X,\omega)$ is an $n$-squared origami. The classification of the $\SL(2,\Z)$-orbits of primitive origamis in $\calH(2)$ is equivalent to the fact that $W_{n^{2}}$ is connected for $n = 3$ and $n\geq 4$ even, and has two connected components for $n\geq 5$ odd (see the classification by McMullen~\cite{M}).

It follows from work of Mukamel~\cite[Corollary 1.4]{Muk} that all of the components of $W_{n^{2}}$ have genus zero for $n\leq 7$ and one of the components has genus zero for $n = 9$. We see that this planarity range does not agree with the planarity of our graphs $\mathcal{G}_{n}$. However, if one considers the generating set $\{P,R\}$ for $\SL(2,\Z)$ where 
\[P = \begin{bmatrix}
0 & 1 \\
-1 & 1
\end{bmatrix}
\,\,\,\text{and}\,\,\,
R = \begin{bmatrix}
0 & -1 \\
1 & 0
\end{bmatrix},\]
and forms the associated orbit graphs in this setting, which we shall denote by $\overline{\mathcal{G}}_{n}$, $\overline{\mathcal{G}}_{n}^{A}$, and $\overline{\mathcal{G}}_{n}^{B}$, then computational experiments lead us to make the following conjecture.

\begin{conjecture}
The graphs $\overline{\mathcal{G}}_{n}$, $\overline{\mathcal{G}}_{n}^{A}$, and $\overline{\mathcal{G}}_{n}^{B}$ are planar for $n\leq 7$. The graph $\overline{\mathcal{G}}_{9}^{B}$ is planar. All remaining graphs are non-planar.
\end{conjecture}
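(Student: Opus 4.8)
The plan is to exploit the fact that $P$ and $R$ are the standard torsion generators of $\mathrm{PSL}(2,\Z)\cong\Z/2\ast\Z/3$: a direct computation shows that $R$ has order $2$ and $P$ order $3$ in $\mathrm{PSL}(2,\Z)$, and that the two generating sets are related by $S = PR$ and $RP = T^{-1}$. Consequently, in each of the three cases the orbit graph $\overline{\mathcal{G}}$ is the Schreier coset graph of the Veech group $\Gamma\leq\mathrm{PSL}(2,\Z)$ of a representative origami, where $\Gamma$ uniformizes the relevant component of the Weierstrass curve $W_{n^{2}}$ (the $\SL(2,\Z)$-action factors through $\mathrm{PSL}(2,\Z)$, since the hyperelliptic involution identifies $(X,\omega)$ with $(X,-\omega)$). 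This graph is cubic and carries a canonical ribbon structure: the $R$-edges form a perfect matching and the $P$-edges form disjoint triangles, so that $\overline{\mathcal{G}}$ together with this cyclic-ordering data is precisely the \emph{dessin d'enfant} of the cover $W_{n^{2}}\to\mathbb{H}/\mathrm{PSL}(2,\Z)$. First I would make this identification precise, so that the embedding genus of the ribbon graph equals the genus of $W_{n^{2}}$ via the usual modular-curve formula $g = 1 + \frac{d}{12} - \frac{e_{2}}{4} - \frac{e_{3}}{3} - \frac{e_{\infty}}{2}$, where $d=[\mathrm{PSL}(2,\Z):\Gamma]$ is the number of vertices and $e_{2}, e_{3}, e_{\infty}$ count, respectively, the $R$-fixed vertices, the $P$-fixed vertices, and the cusps (the cycles of the parabolic $T=(RP)^{-1}$).

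With this dictionary in hand, the planar half of the statement is immediate. By Mukamel~\cite[Corollary 1.4]{Muk}, every component of $W_{n^{2}}$ has genus zero for $n\leq 7$, and the $B$-component has genus zero for $n=9$. In each of these cases the canonical ribbon embedding places the corresponding graph $\overline{\mathcal{G}}$, $\overline{\mathcal{G}}^{A}$, or $\overline{\mathcal{G}}^{B}$ on a genus-zero surface, namely the sphere, so the underlying abstract graph is planar. Thus the entire planar direction reduces to Mukamel's genus computation together with the ribbon-graph identification above.

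The substance of the conjecture is the non-planar direction, and here the dictionary only supplies an upper bound: genus zero of $W_{n^{2}}$ forces planarity, but positive genus of the ribbon surface does not by itself force the abstract graph to be non-planar, since the minimal genus of an abstract graph may be strictly smaller than that of one prescribed embedding. I see two routes. The first would be to prove that the canonical dessin embedding is genus-minimal for these graphs, reducing non-planarity to positivity of the modular-curve genus; but I expect this to be delicate and false in general, so I would not rely on it. The more promising route mirrors Theorem~\ref{t:main}: translate the explicit orbit descriptions of Subsection~\ref{s:orbits} into the $\{P,R\}$ combinatorics via $S=PR$ and $RP=T^{-1}$, and then exhibit a $K_{3,3}$ (or $K_{5}$) minor directly in each remaining graph.

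The main obstacle is that this must be carried out uniformly over an infinite family with $n\to\infty$, rather than case by case. Accordingly I would search for a recurring sub-pattern in the Hubert--Leli{\`e}vre/McMullen normal forms of the origamis~\cite{HL, M} --- a configuration of $P$-triangles and $R$-matching edges that persists as $n$ grows and always contracts to $K_{3,3}$ --- paralleling the stable minor construction used for $\mathcal{G}_{n}$ in the main theorem, while separately handling the finitely many borderline small cases ($n=8$, and $\overline{\mathcal{G}}_{9}^{A}$ versus $\overline{\mathcal{G}}_{9}^{B}$) by hand. Closing the gap between Mukamel's genus lower bounds and genuine abstract non-planarity for all large $n$ is the crux of the problem, and is precisely why the statement is posed as a conjecture rather than a theorem.
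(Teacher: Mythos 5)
This statement is a \emph{conjecture}: the paper offers no proof of it, only computational evidence (planarity of $\overline{\mathcal{G}}_{n}$, $\overline{\mathcal{G}}_{n}^{A}$, $\overline{\mathcal{G}}_{n}^{B}$ for $n\leq 7$ and of $\overline{\mathcal{G}}_{9}^{B}$ is verified by computer, and non-planarity is verified only in the finite ranges $8\leq n\leq 16$, $9\leq n\leq 17$, and $11\leq n\leq 23$, respectively). So there is no proof in the paper to compare yours against, and your proposal --- as you yourself acknowledge --- does not prove the conjecture either. That said, the planar half of your argument is sound in outline and is genuinely more than what the paper does. Your algebra checks out: $R$ has order $2$ and $P$ order $3$ in $\mathrm{PSL}(2,\Z)$, $S=PR$, $RP=T^{-1}$, and in $\calH(2)$ the element $-I$ acts trivially because every surface there is hyperelliptic, so the $\{P,R\}$-orbit graph is indeed the Schreier graph of $\mathrm{PSL}(2,\Z)$ modulo the projectivized Veech group $\Gamma$, and the canonical ribbon structure embeds it in the compactification of $\mathbb{H}/\Gamma$, i.e.\ in the relevant component of $W_{n^{2}}$ (by McMullen's classification~\cite{M}, these components are exactly the Teichm\"uller curves of the primitive $n$-squared origamis). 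Combined with Mukamel~\cite[Corollary 1.4]{Muk}, this would upgrade the paper's computational confirmation of the planarity claims to a conceptual proof, and it explains the coincidence that the paper only observes empirically, namely that the conjectured planarity range agrees exactly with Mukamel's genus-zero range. One correction: $W_{n^{2}}$ does have orbifold points (counting them is the very subject of Mukamel's paper), so the $R$-edges need not form a perfect matching nor the $P$-edges disjoint triangles --- there can be $R$- and $P$-fixed vertices --- but this only introduces loops or lower-degree vertices; the genus formula you quote already carries the $e_{2}$ and $e_{3}$ terms, and the embedding argument survives.

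The non-planar half is where the conjecture actually lives, and there your proposal contains no argument: as you correctly note, positive genus of the ribbon embedding does not force non-planarity of the abstract graph (the minimal embedding genus can be strictly smaller), and your proposed uniform $K_{3,3}$-minor construction in the $\{P,R\}$-combinatorics, modelled on the proof of Theorem~\ref{t:main}, is a program rather than a proof. This is not a flaw you could have avoided --- it is precisely why the statement is posed as a conjecture --- but the bottom line should be stated plainly: modulo making the ribbon-graph identification precise, your proposal would establish only the planarity assertions, and leaves the non-planarity assertions in the same open state as the paper does, where they are supported only by finite-range computation.
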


In other words, we conjecture that planarity in this setting exactly agrees with the range for genus zero components of $W_{n^{2}}$ given by Mukamel. The planarity for $n\leq 7$ and of $\overline{\mathcal{G}}_{9}^{B}$ is computationally confirmed and so the conjecture is that the remainder are all non-planar. The non-planarity of $\overline{\mathcal{G}}_{n}$ has been confirmed for even $n$ in the range $8\leq n\leq 16$, of $\overline{\mathcal{G}}_{n}^{A}$ in the range $9\leq n\leq 17$, and of $\overline{\mathcal{G}}_{n}^{B}$ in the range $11\leq n\leq 23$. 

Finally, we also remark that there exists a generalisation of the separator theorem of Lipton-Tarjan given by Gilbert-Hutchinson-Tarjan~\cite{GHT} from which it follows that a family of expanders must have genus tending to infinity. So, for us, if the family of graphs $\mathcal{G}_{n}$, or $\overline{\mathcal{G}}_{n}$, are a family of expanders then the genus of these graphs must tend to infinity. In the Weierstrass curve setting, Mukamel~\cite[Corollary 1.3]{Muk}, building on work of Bainbridge~\cite{B}, has shown that the genus of any component of $W_{D}$ tends to infinity as $D$ tends to infinity.

\subsection*{Acknowledgements}

We thank Curtis McMullen for bringing additional references to our attention and for suggesting the investigation of the alternate generating set for $\SL(2,\Z)$. We also thank the anonymous referee for their careful reading of this article.


\section{Origami preliminaries} 

Here we give an introduction to origamis and the classification of their $\SL(2,\Z)$-orbits in the stratum $\calH(2)$. In particular, we will discuss their algebraic description using pairs of permutations which we will make use of in the remainder of the paper.

\subsection{Origamis}

An origami is an orientable connected surface obtained from a collection of unit squares in $\R^{2}$ by identifying by translation left-hand sides with right-hand sides and top sides with bottom sides. See for example the surface in Figure~\ref{f:example} where sides with the same label are identified by translation.

\begin{figure}[t]
\begin{center}
\begin{tikzpicture}[scale = 1.25]
\draw [line width = 0.55mm, line cap = round] (0,0)--node{$\geq$}(0,1);
\draw [line width = 0.55mm, line cap = round] (0,1)--node{$|$}(1,1);
\draw [line width = 0.55mm, line cap = round] (1,1)--node{$||$}(2,1);
\draw [line width = 0.55mm, line cap = round] (2,1)--node{$|||$}(3,1);
\draw [line width = 0.55mm, line cap = round] (3,1)--node{$\geq$}(3,0);
\draw [line width = 0.55mm, line cap = round] (0,0)--node{$|||$}(1,0);
\draw [line width = 0.55mm, line cap = round] (1,0)--node{$||$}(2,0);
\draw [line width = 0.55mm, line cap = round] (2,0)--node{$|$}(3,0);
\node at (0.5,0.5) {1};
\node at (1.5,0.5) {2};
\node at (2.5,0.5) {3};
\draw [densely dashed, line width = 0.35mm, line cap = round] (1,0)--(1,1);
\draw [densely dashed, line width = 0.35mm, line cap = round] (2,0)--(2,1);
\end{tikzpicture}
\end{center}
\caption{An origami in $\calH(2)$}\label{f:example}
\end{figure}

An $n$-squared origami can also be described by two permutations $h$ and $v$ in the symmetric group $\Sym(n)$. These permutations are obtained as follows. Firstly, we number the squares from 1 to $n$.  We then define $h$ to be the element of $\Sym(n)$ such that $h(i) = j$ if and only if the right-hand side of the square labelled by $i$ is identified with the left-hand side of the square labelled by $j$. Similarly, we define $v$ to be the permutation satisfying $v(i) = j$ if and only if the top side of the square labelled by $i$ is glued to the bottom side of the square labelled by $j$. For example, the origami in Figure~\ref{f:example} can be described by the pair $(h,v) = ((1,2,3),(1,3))$. Since a different labelling of the squares could produce a different pair of permutations, an origami actually corresponds to a pair $(h,v)$ considered up to simultaneous conjugation of $h$ and $v$. However, we will abuse notation in this article and denote an origami simply by the pair $(h,v)$. When we need to make it clear, we will use the notation $(h,v)\simeq (h',v')$ if the pairs are equivalent by simultaneous conjugation.

We will not describe the constructions of strata of translations surfaces in this paper. Instead, we direct the reader to the expositions found in the surveys discussed in the introduction. For our purposes, it will suffice to say that an origami $O = (h,v)$ lies in the stratum $\calH(k_{1},\ldots,k_{s})$ if when writing the commutator $[h,v] = hvh^{-1}v^{-1}$ as a product of disjoint non-trivial cycles we obtain $s$ cycles of lengths $k_{1}+1,\ldots,k_{s}+1$. For example, the origami in Figure~\ref{f:example} has $[h,v] = (1,3,2)$ consisting of a single 3-cycle and lies in the stratum $\calH(2)$. Similarly, the origami $O' = ((2,3,4),(1,2))$ has commutator $(1,2,3)$ and hence also lies in $\calH(2)$.

We will also make use of the monodromy group $\sigma(O)$ of an origami $O$ which is defined to be the subgroup of $\Sym(n)$ generated by $h$ and $v$; that is, $\sigma((h,v)) = \langle h, v\rangle$. An origami is then said to be primitive if its monodromy group is a primitive subgroup of $\Sym(n)$. Topologically, an origami is said to be primitive if it is not a proper cover of another origami (other than the unit-square torus of which all origamis are a cover). Both of these notions of primitivity are equivalent to one another. For more on the translation between the algebraic and topological descriptions of origamis we direct the reader to the thesis of Zmiaikou~\cite{Zm}.

We remark that the definitions of the monodromy group and of the stratum of an origami are well-defined in the sense that they are invariant under simultaneous conjugation of $h$ and $v$.

\subsection{Orbit classification} \label{s:orbits}

We now discuss the action of $\SL(2,\Z)$ on origamis and the classification of orbits in the stratum $\calH(2)$.

The group $\SL(2,\Z)$ acts on origamis by its natural action on the plane. Indeed, up to cutting and pasting, unit squares are mapped to unit squares and parallel sides are sent to parallel sides. Consider $\SL(2,\Z) = \langle T, S\rangle$, where
\[T = \begin{bmatrix}
1 & 1 \\
0 & 1
\end{bmatrix}
\,\,\,\text{and}\,\,\,
S = \begin{bmatrix}
1 & 0 \\
1 & 1
\end{bmatrix}.\]

The matrix $T$ acts by horizontally shearing the origami to the right while the matrix $S$ acts by vertically shearing the origami upwards.

It can then be checked, see Figure~\ref{f:T-action}, that
\[T((h,v)) = (h,vh^{-1}).\]
Similarly, it can be argued by symmetry (exchanging the roles of $h$ and $v$) that 
\[S((h,v)) = (hv^{-1},v).\]
In particular, it follows that the number of squares, the stratum, the primitivity, and the monodromy group of an origami are invariant under the action of $\SL(2,\Z)$. As such, it makes sense to discuss the $\SL(2,\Z)$-orbits of primitive $n$-squared origamis in a given stratum.

\begin{figure}[t]
\begin{center}
\includegraphics[scale=1]{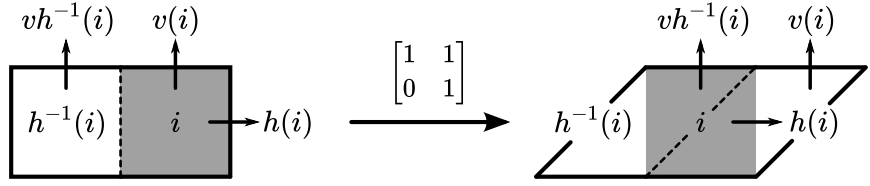}
\end{center}
\caption{The action of $T$ on the permutations $h$ and $v$ of an origami.}\label{f:T-action}
\end{figure}

It is a consequence of the classification of $\SL(2,\Z)$-orbits of primitive origamis in $\calH(2)$ due to McMullen~\cite{M} and Hubert-Leli{\`e}vre~\cite{HL} that the monodromy group is in fact a strong $\SL(2,\Z)$-invariant. That is, we have the following statement.

\begin{theorem}[McMullen~{\cite{M}}, Hubert-Leli\`evre~{\cite{HL}}]
Let $n = 3$ or let $n\geq 4$ be even. Then there is a single $\SL(2,\Z)$-orbit of primitive $n$-squared origamis in $\calH(2)$. Every origami in this orbit has monodromy group the full symmetric group $\Sym(n)$.

Let $n\geq 5$ be odd. Then there are two $\SL(2,\Z)$-orbits, the A-orbit and the B-orbit, of primitive $n$-squared origamis in $\calH(2)$. Every origami in the A-orbit has monodromy group the full symmetric group $\Sym(n)$, while every origami in the B-orbit has monodromy group the alternating group $\Alt(n)$.
\end{theorem}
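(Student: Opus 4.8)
The plan is to split the statement into two very different tasks: identifying an $\SL(2,\Z)$-orbit invariant that already forces the monodromy dichotomy, and then reducing the actual \emph{counting} of orbits to a connectivity statement about normal forms. The first task is essentially group theory and is quick; the second is where the genuine content sits.

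First I would pin the monodromy down to the two stated possibilities. Since the origami is primitive, $G := \langle h,v\rangle$ is a primitive subgroup of $\Sym(n)$, and since the origami lies in $\calH(2)$ the commutator $[h,v]$ is a single $3$-cycle, so $G$ contains a $3$-cycle. By a classical theorem of Jordan, a primitive subgroup of $\Sym(n)$ containing a $3$-cycle is either $\Alt(n)$ or $\Sym(n)$. Hence $G\in\{\Alt(n),\Sym(n)\}$ for every primitive origami in $\calH(2)$, with $G=\Alt(n)$ precisely when both $h$ and $v$ are even. I would then show that this parity condition is an $\SL(2,\Z)$-invariant. Writing $\epsilon$ for the sign, the formula $T((h,v))=(h,vh^{-1})$ sends the pair $(\epsilon(h),\epsilon(v))\in(\Z/2)^{2}$ to $(\epsilon(h),\epsilon(h)+\epsilon(v))$ and $S((h,v))=(hv^{-1},v)$ sends it to $(\epsilon(h)+\epsilon(v),\epsilon(v))$; these are the $\Z/2$-linear automorphisms $\left(\begin{smallmatrix}1&0\\1&1\end{smallmatrix}\right)$ and $\left(\begin{smallmatrix}1&1\\0&1\end{smallmatrix}\right)$, which generate $\SL(2,\Z/2)$. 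As $\SL(2,\Z/2)\cong\Sym(3)$ acts on $(\Z/2)^{2}$ with orbits $\{(0,0)\}$ and $(\Z/2)^{2}\setminus\{(0,0)\}$, the condition $(\epsilon(h),\epsilon(v))=(0,0)$ — equivalently $G=\Alt(n)$ — is constant along each $\SL(2,\Z)$-orbit. This already proves that the monodromy group is a strong invariant and that an $\Alt(n)$-orbit can never be identified with a $\Sym(n)$-orbit.

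Next I would determine which parity classes actually occur, using one-cylinder normal forms. The key reduction is the standard fact that every $\SL(2,\Z)$-orbit in $\calH(2)$ contains a one-cylinder representative, for which $h$ is a single $n$-cycle; such a cycle is even if and only if $n$ is odd. Consequently, for $n$ even every orbit has $\epsilon(h)=1$ and therefore monodromy $\Sym(n)$, so the $\Alt(n)$-class is empty and \emph{the monodromy claim for even $n$ follows without any transitivity input}. For $n=3$ the group $\Alt(3)$ is abelian, so $[h,v]$ cannot be a $3$-cycle with both generators even, and again only the $\Sym(3)$-class occurs. For odd $n\geq 5$ every one-cylinder representative has $\epsilon(h)=0$, and the two classes are distinguished by $\epsilon(v)$; to see that both are non-empty I would exhibit explicit one-cylinder origamis $(h,v)=((1,2,\ldots,n),v)$ with $[h,v]$ a $3$-cycle and with $v$ of each sign, checking primitivity and computing $\epsilon(v)$ directly, the even case then giving $G=\Alt(n)$ by Jordan's theorem.

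Finally — and this is where the real work lies — I would prove the connectivity statement: within each non-empty parity class the $\SL(2,\Z)$-action is transitive, giving exactly one orbit for $n=3$ or $n$ even and exactly two for odd $n\geq 5$. The natural route is a cusp analysis: classify the one- and two-cylinder horizontal decompositions of surfaces in $\calH(2)$, parametrise the one-cylinder representatives by their bounded combinatorial data (the cyclic arrangement of the cylinder together with the position of the zero), and show that the parabolic $T$ together with the cylinder-switching moves connect all representatives of a fixed parity class. I expect this transitivity — equivalently, the completeness of the parity invariant above — to be the main obstacle, since it is precisely the step where Hubert-Leli\`evre's cusp bookkeeping, or McMullen's spin invariant and the real-multiplication structure of the Weierstrass curves $W_{n^{2}}$, must be invoked, and where the dependence of the final count on $n \bmod 2$ is genuinely explained rather than merely observed.
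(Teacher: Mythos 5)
The paper does not actually prove this statement: it is quoted from McMullen and Hubert--Leli\`evre as an external classification result, so there is no internal proof to compare against. Judged on its own terms, your proposal correctly handles the \emph{invariant} half of the theorem. Jordan's theorem does pin the monodromy group of a primitive origami in $\calH(2)$ to $\Alt(n)$ or $\Sym(n)$ (the commutator being a single $3$-cycle); the parity vector $(\epsilon(h),\epsilon(v))$ transforms under $T$ and $S$ by the two standard unipotents of $\SL(2,\Z/2)$, so the condition ``both even'' is orbit-invariant; and the sign of an $n$-cycle rules out the $\Alt(n)$ class for even $n$, while abelianness of $\Alt(3)$ handles $n=3$. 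These steps are correct, and they give a more elementary way to see that the monodromy group is a strong invariant than McMullen's spin invariant or Hubert--Leli\`evre's count of integer Weierstrass points. (Note, though, that invariance of the monodromy group itself is immediate from $\langle h,vh^{-1}\rangle=\langle h,v\rangle$, as the paper already records, so the $\SL(2,\Z/2)$ computation is a pleasant repackaging rather than new leverage.)

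The genuine gap is the one you flag yourself but do not close: the orbit \emph{count}. An invariant argument only bounds the number of orbits from below; the statements ``a single orbit'' and ``two orbits'' require proving transitivity of $\SL(2,\Z)$ within each non-empty parity class, and your text names the possible strategies (cusp bookkeeping, normal forms, the structure of $W_{n^{2}}$) without carrying any of them out. That transitivity is the entire content of the cited theorem --- it is Hubert--Leli\`evre's induction through explicit one- and two-cylinder cusp representatives, or McMullen's analysis of the Weierstrass curves --- and nothing in your sketch substitutes for it. A secondary gap: you invoke as ``standard'' that every $\SL(2,\Z)$-orbit in $\calH(2)$ contains a one-cylinder representative; this is true but is itself part of the cusp analysis you are deferring, and your conclusion that the $\Alt(n)$ class is empty for even $n$ rests on it. As written, the proposal is a correct framing plus a proof of the easy half, with the main assertion still outsourced to the references --- which is, in fairness, exactly what the paper itself does.
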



\section{Construction of the minor}\label{s:minors}

In this section, we construct the $K_{3,3}$ minors in each of the orbits.

\subsection{Even squared orbits}

Let $n\geq 4$ be even and define the origamis $O_{1},\ldots,O_{6}$ as follows:
\begin{align*}
O_{1} &= ((n-1,n), (1,2,\ldots,n-1)), \\
O_{2} &= ((1,2,\ldots,n),(1,n-1,n-3,\ldots,3,n,n-2,\ldots,2)), \\
O_{3} &= ((1,2,\ldots,n),(n-1,n)), \\
O_{4} &= ((1,2,\ldots,n),(2,3,\ldots,n)), \\
O_{5} &= ((1,2,\ldots,n),(2,n,n-1,\ldots,3)),\,\,\,\text{ and } \\
O_{6} &= ((2,3,\ldots,n),(1,2,n,n-1,\ldots,3)).
\end{align*}
By calculating $[h,v]$ in each of the above cases, it is easy to check that these origamis lie in $\calH(2)$. We then have the following.

\begin{proposition}\label{p:even}
Let $n\geq 4$ be even and define $O_{1},\ldots,O_{6}$ as above. Then there exists a $K_{3,3}$ minor in $\mathcal{G}_{n}$ with bipartition $\{\{O_{1},O_{2},O_{3}\},\{O_{4},O_{5},O_{6}\}\}$.
\end{proposition}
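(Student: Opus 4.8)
The plan is to exhibit an explicit $K_{3,3}$ minor by locating the six origamis $O_1,\dots,O_6$ inside the orbit graph $\mathcal{G}_n$ and producing six vertex-disjoint paths (or edges) joining each $O_i$ with $i\in\{1,2,3\}$ to each $O_j$ with $j\in\{4,5,6\}$. Since a $K_{3,3}$ minor is equivalent to finding six pairwise disjoint connected subgraphs (branch sets), one per vertex of $K_{3,3}$, with an edge between the branch sets of every two vertices in opposite parts, the cleanest route is to show that many of the required nine adjacencies are in fact genuine edges of $\mathcal{G}_n$, realised by a single application of $T^{\pm1}$ or $S^{\pm1}$, using the formulas $T((h,v))=(h,vh^{-1})$ and $S((h,v))=(hv^{-1},v)$ from the excerpt. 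For the remaining adjacencies I would find short paths through auxiliary vertices and argue that these paths can be taken disjoint from one another and from the six branch vertices.

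First I would compute, for each of $O_1,\dots,O_6$, the four neighbours obtained by applying $T^{\pm1}$ and $S^{\pm1}$, simplifying each resulting pair $(h',v')$ and comparing it up to simultaneous conjugation $\simeq$ with the listed origamis; the goal is to verify directly that certain pairs such as $(O_1,O_4)$, $(O_2,O_5)$, $(O_3,O_6)$, etc., are joined by an edge. This is a mechanical permutation computation: for instance $T(O_3)=((1,\dots,n),(n-1,n)(1,\dots,n)^{-1})$, and one checks whether the product lands (after conjugation) on one of the target cycle types. I would organise these checks into a small adjacency table recording which generator realises each edge. Because the six origamis are designed so that their $h$-components are mostly the full $n$-cycle $(1,2,\dots,n)$, the action of $T$ and $S$ keeps $h$ or $v$ fixed and only modifies the other permutation, which makes the comparisons tractable.

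For whichever of the nine pairs are not directly adjacent, I would construct an explicit connecting path by iterating the generators, and then I would enlarge the relevant branch set to absorb that path. The essential bookkeeping is disjointness: I must guarantee that the connecting paths for distinct pairs do not share intermediate vertices, and that no intermediate vertex coincides with one of $O_1,\dots,O_6$. To make this rigorous I would identify each intermediate origami explicitly (again as a reduced pair of permutations up to $\simeq$) and verify that the full list of vertices used across all branch sets consists of distinct origamis; distinctness is checked by comparing cycle types and, when cycle types coincide, by checking that no simultaneous conjugation carries one pair to the other.

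The main obstacle I anticipate is exactly this disjointness and distinctness verification, rather than finding the edges themselves. Deciding $(h,v)\simeq(h',v')$ requires ruling out the existence of a conjugating permutation, which for generic pairs can be settled by comparing the cycle type of $h$, of $v$, and of the commutator $[h,v]$, but in borderline cases may need a more careful invariant (for example, tracking how the $h$-cycle and $v$-cycle interlock on a marked point). A secondary concern is uniformity in $n$: the definitions of $O_1,\dots,O_6$ and of any intermediate vertices must be written so that the same combinatorial pattern yields a valid $K_{3,3}$ minor simultaneously for every even $n\ge 4$, so I would phrase all permutation identities symbolically in $n$ and confirm the small-$n$ boundary case $n=4$ separately to make sure no cycles degenerate.
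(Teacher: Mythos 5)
Your outline follows the same strategy as the paper's proof---realise as many of the nine required adjacencies as possible by single applications of $T^{\pm1}$ or $S^{\pm1}$, bridge the rest with explicit paths, and verify disjointness by identifying every intermediate origami up to simultaneous conjugation. Five of the nine pairs are indeed single edges ($\overline{14}=S^{-1}$, $\overline{15}=S$, $\overline{25}=T^{-1}$, $\overline{34}=T^{-1}$, $\overline{35}=T$), and two more are length-two paths ($\overline{16}=S\circ T$ and $\overline{36}=T\circ S$) through explicitly named intermediate vertices. So far your plan matches the actual argument.

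The genuine gap is in your treatment of the remaining connections. You assume the non-adjacent pairs can be joined by ``short paths through auxiliary vertices'' whose intermediate origamis you would list and compare pairwise. But two of the nine connections cannot be short: in the paper $\overline{24}=T^{n-3}$ and $\overline{26}=T^{-(n-4)}\circ S^{-1}$, so their lengths grow linearly in $n$, and a fixed finite list of intermediate vertices compared case by case cannot establish disjointness uniformly in $n$. What is actually needed is a structural argument: the path $\overline{24}$ traverses the complement of $\{O_{4},O_{3},O_{5},O_{2}\}$ in the $T$-orbit of $O_{4}$ (which has length $n$), and it is disjoint from everything else because the conjugacy class of the first permutation is constant along a $T$-orbit; the path $\overline{26}$ first applies $S^{-1}$ to land in the $T$-orbit of $O_{6}$ (length $n-1$) and must then travel in the direction of $T^{-1}$ specifically to avoid the vertex $O''=T^{-1}(O_{6})$ already consumed by $\overline{36}$. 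Neither the existence of these long detours nor the orbit-theoretic mechanism that certifies their disjointness appears in your proposal, and without them the construction is not complete. (Your instinct to check $n=4$ separately is sound: there $S^{-1}(O_{2})=O_{6}$ and the second long path degenerates to a single edge.)
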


\begin{proof}
Let $\overline{ij}$ denote the path in the $K_{3,3}$ minor between $O_{i}$ and $O_{j}$. We claim that the following paths realise the minor:
\begin{align*}
\overline{14} &= S^{-1}, \\
\overline{15} &= S, \\
\overline{16} &= S\circ T, \\
\overline{24} &= T^{n-3}, \\
\overline{25} &= T^{-1}, \\
\overline{26} &= T^{-(n-4)}\circ S^{-1}, \\
\overline{34} &= T^{-1}, \\
\overline{35} &= T,\,\,\,\text{ and } \\ 
\overline{36} &= T\circ S.
\end{align*}

We need only check that these paths do connect the claimed origamis and that they are pairwise disjoint.

It is readily checked that we have
\[\overline{14} = \{O_{1},O_{4}\},\,\, \overline{15} = \{O_{1},O_{5}\},\,\, \overline{25} = \{O_{2},O_{5}\},\,\, \overline{34} = \{O_{3},O_{4}\},\]
and
\[\overline{35} = \{O_{3},O_{5}\}.\]
Similarly, we have
\[\overline{16} = \{O_{1},O',O_{6}\},\]
where
\[O' = ((n-1,n),(1,2,\ldots,n)),\]
and
\[\overline{36} = \{O_{3},O'',O_{6}\},\]
where
\[O'' = ((1,2,\ldots,n-1),(n-1,n)) \simeq ((2,3,\ldots,n),(1,2)).\]

Finally, we must check the paths $\overline{24}$ and $\overline{26}$. We observe that the origami $O_{4}$ has a $T$-orbit of length $n$ which begins with $O_{4},O_{3},O_{5}$, and $O_{2}$. The remaining $n-4$ points of the orbit form the path $\overline{24}$ and, since the conjugacy class of the first permutation remains unchanged along this orbit, we see that this path is disjoint from all of the paths considered so far.

For $\overline{26}$, we observe that
\[S^{-1}(O_{2}) = ((1,n,n-1,\ldots,3),(1,n-1,n-3,\ldots,3,n,n-2,\ldots,2))\]
\[\simeq ((2,3,\ldots,n),(1,2,4,\ldots,n,3,5,\ldots,n-1)).\]
Observe that for $n=4$ we have $S^{-1}(O_{2}) = O_{6}$. We will assume $n\geq 6$ in the remainder. It can then be checked that $S^{-1}(O_{2})$ lies in the same $T$-orbit as $O_{6}$. However, $O'' = T^{-1}(O_{6}) = T^{2}\circ S^{-1}(O_{2})$ and so, to avoid $O''$, we must complete the path $\overline{26}$ using powers of $T^{-1}$. The $T$-orbit of $O_{6}$ is of size $n-1$ and so we require $T^{-(n-4)}$ to take $S^{-1}(O_{2})$ to $O_{6}$. As $O''$ is the only origami in the $T$-orbit of $O_{6}$ that we have used already (apart from $O_{6}$, of course) we see that this path is disjoint from all of those we have considered above. This completes the proof.
\end{proof}

\subsection{A-orbits}

Let $n\geq 5$ be odd and define the origamis $O_{1},\ldots,O_{6}$ as follows:
\begin{align*}
O_{1} &= ((n-1,n), (1,2,\ldots,n-1)), \\
O_{2} &= ((1,2,\ldots,n),(1,n-1,n-3,\ldots,2)(n,n-2,\ldots,3)), \\
O_{3} &= ((1,2,\ldots,n),(n-1,n)), \\
O_{4} &= ((1,2,\ldots,n),(2,3,\ldots,n)), \\
O_{5} &= ((1,2,\ldots,n),(2,n,n-1,\ldots,3)),\,\,\,\text{ and } \\
O_{6} &= ((2,3,\ldots,n),(1,2,n,n-1,\ldots,3)).
\end{align*}
By calculating $[h,v]$ in each of the above cases, it is easy to check that these origamis lie in $\calH(2)$. Furthermore, they all have their monodromy group being the symmetric group and so they lie in the A-orbits. We then have the following.

\begin{proposition}
Let $n\geq 5$ be odd and define $O_{1},\ldots,O_{6}$ as above. Then there exists a $K_{3,3}$ minor in $\mathcal{G}_{n}^{A}$ with bipartition $\{\{O_{1},O_{2},O_{3}\},\{O_{4},O_{5},O_{6}\}\}$, where the paths are given exactly as in the proof of Proposition~\ref{p:even}.
\end{proposition}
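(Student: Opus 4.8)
The plan is to leverage that the proposition asserts the \emph{same} nine paths as Proposition~\ref{p:even}, and that only one of the six vertices has changed. Indeed, $O_1$, $O_3$, $O_4$, $O_5$, and $O_6$ are given here by exactly the permutation formulas used in the even case, while the matrices act by the parity-independent rules $T((h,v)) = (h, vh^{-1})$ and $S((h,v)) = (hv^{-1},v)$. Consequently the six paths not meeting $O_2$ --- namely $\overline{14}$, $\overline{15}$, $\overline{16}$, $\overline{34}$, $\overline{35}$, and $\overline{36}$ --- connect the same endpoints through the same intermediate origamis $O'$ and $O''$, and the verifications given in the proof of Proposition~\ref{p:even} for them never invoke the parity of $n$. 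I would record this observation and thereby reduce the proposition to re-establishing the three paths incident to $O_2$, together with their disjointness from the rest.

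The crucial step is the identity $T(O_5)=O_2$ underlying $\overline{25}=T^{-1}$. Writing $h=(1,2,\ldots,n)$ and $v_5=(2,n,n-1,\ldots,3)$, I would compute the product $v_5h^{-1}$ directly. The key phenomenon is that $v_5h^{-1}$ is the single $n$-cycle displayed in Proposition~\ref{p:even} exactly when $n$ is even, but splits into the two cycles $(1,n-1,n-3,\ldots,2)(n,n-2,\ldots,3)$ exactly when $n$ is odd. This parity dichotomy is the \emph{only} point at which the parity of $n$ enters, and it is precisely what forces the displayed formula for $O_2$ to differ between the two propositions; in both cases $O_2$ is the honest value of $T(O_5)$.

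With $T(O_5)=O_2$ in hand, the two $T$-orbit structures follow as before. For $\overline{24}$, using $v_4=(1,2)\,h$ one gets $T^{k}(O_4)=(h,(1,2)\,h^{1-k})$, and since $(1,2)\,h^{m}$ is a transposition only for $m\equiv 0 \pmod n$ (it moves $n-2\geq 3$ points otherwise), the $T$-orbit of $O_4$ has length exactly $n$ for every $n\geq 5$; it begins $O_4,O_3,O_5,O_2$, and its remaining $n-4$ points, all carrying the $n$-cycle $h$ as first permutation, give a simple path $\overline{24}=T^{n-3}$ disjoint from the paths among $O_1,O_6,O',O''$, none of whose first permutations is an $n$-cycle. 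For $\overline{26}$ I would compute $S^{-1}(O_2)=(hv_2,v_2)$, check that $hv_2$ is an $(n-1)$-cycle so that $S^{-1}(O_2)$ sits in the $T$-orbit of $O_6$, and confirm by the analogous conjugacy argument that this orbit has length $n-1$ with $S^{-1}(O_2)=T^{-3}(O_6)$. The path $T^{-(n-4)}$ then runs through orbit-positions $n-4,n-5,\ldots,0$, reaching $O_6$ while avoiding $O''=T^{-1}(O_6)$, which occupies position $n-2\notin\{0,\ldots,n-4\}$; since $n-4\geq 1$ for odd $n\geq 5$, there is no degenerate small case to isolate (in contrast to $n=4$ in the even proof).

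I expect the main obstacle to be bookkeeping rather than conceptual: confirming that the two orbit-length and orbit-position claims --- $O_2$ at position $3$ in the length-$n$ orbit of $O_4$, and $S^{-1}(O_2)$ at position $n-4$ in the length-$(n-1)$ orbit of $O_6$ --- are genuinely insensitive to parity. Because every group action used is parity-uniform and the sole parity-dependent object is the cycle decomposition of $v_5h^{-1}=v_2$, I anticipate each claim collapses to the same permutation computation as in Proposition~\ref{p:even}. The one genuinely new calculation is the two-cycle form of $v_5h^{-1}$ for odd $n$, which I would carry out in full and which I have checked explicitly at $n=5$.
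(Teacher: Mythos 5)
Your proposal is correct and takes essentially the same route as the paper, whose proof simply states that the argument of Proposition~\ref{p:even} carries over verbatim once $O_{2}$ is modified and the new value of $S^{-1}(O_{2})$ is recorded. You supply more detail than the paper does --- in particular, isolating the cycle-type of $v_{5}h^{-1}$ as the sole parity-dependent computation and re-checking the orbit lengths and positions for $\overline{24}$ and $\overline{26}$ --- but these are exactly the verifications the paper leaves implicit.
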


\begin{proof}
The argument of Proposition~\ref{p:even} follows exactly subject to the modification of $O_{2}$. The only other change required is that now
\[S^{-1}(O_{2}) = ((2,3,\ldots,n),(1,2,4,\ldots,n-1)(3,5,\ldots,n)).\]
\end{proof}

\subsection{B-orbits}

Let $n\geq 7$ be odd and define the origamis $O_{1},\ldots,O_{6}$ as follows:
\begin{align*}
O_{1} &= ((n-2,n-1,n), (1,2,\ldots,n-2)), \\
O_{2} &= ((n-2,n-1,n), (1,2,\ldots,n)),\\
O_{3} &= ((3,4,\ldots,n),(1,2,3)), \\
O_{4} &= ((n-2,n-1,n),(1,2,\ldots,n-2,n,n-1)), \\
O_{5} &= ((3,4,\ldots,n),(1,2,\ldots,n)),\,\,\,\text{ and } \\
O_{6} &= ((3,4,\ldots,n),(1,2,3,n,n-1,\ldots,4)).
\end{align*}
Again, it is easy to check that these origamis lie in $\calH(2)$. Furthermore, they all have their monodromy group being the alternating group and so they lie in the B-orbits. We then have the following.

\begin{proposition}
Let $n\geq 7$ be odd and define $O_{1},\ldots,O_{6}$ as above. Then there exists a $K_{3,3}$ minor in $\mathcal{G}_{n}^{B}$ with bipartition
\[\{\{O_{1},O_{2},O_{3}\},\{O_{4},O_{5},O_{6}\}\}.\]
\end{proposition}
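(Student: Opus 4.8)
The plan is to follow the exact template established in the proofs of Proposition~\ref{p:even} and its A-orbit analogue, exhibiting explicit edge-paths in $\mathcal{G}_{n}^{B}$ realising the $K_{3,3}$ minor on the bipartition $\{\{O_{1},O_{2},O_{3}\},\{O_{4},O_{5},O_{6}\}\}$. First I would write down candidate paths $\overline{ij}$ as words in $T$, $T^{-1}$, $S$, and $S^{-1}$ connecting each $O_{i}$ in the first part to each $O_{j}$ in the second part, guided by the structure of the six origamis. Since the action is $T((h,v)) = (h,vh^{-1})$ and $S((h,v)) = (hv^{-1},v)$, horizontal ($T$) moves fix the conjugacy class of $h$ and vertical ($S$) moves fix that of $v$; this invariance is the key bookkeeping tool. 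I notice that $O_{1},O_{2},O_{4}$ all share $h = (n-2,n-1,n)$ and $O_{3},O_{5},O_{6}$ all share $h = (3,4,\ldots,n)$, so short $T$-powers should connect origamis within each of these two families, while a single $S$-move is what transitions between the $h$-classes. The likely skeleton is therefore: $\overline{14}$, $\overline{24}$ as $T$-powers (all three live in the same $h$-class $(n-2,n-1,n)$); $\overline{35}$, $\overline{36}$ as $T$-powers (same $h$-class $(3,4,\ldots,n)$); and the remaining four paths $\overline{15}$, $\overline{16}$, $\overline{25}$, $\overline{26}$, $\overline{34}$ crossing between classes and hence beginning or ending with $S^{\pm 1}$, possibly composed with short $T$-powers.

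Next I would verify each path does what it claims by direct computation of the permutation pairs, using simultaneous conjugation ($\simeq$) to recognise the endpoints, exactly as the even case records intermediate vertices like $O'$ and $O''$. The two structurally interesting verifications are the long paths: I expect one of them (analogous to $\overline{24}$ in the even case) to traverse most of a $T$-orbit, so I would compute the length of the relevant $T$-orbit — governed by the cycle type of $v$ after the appropriate shear — and confirm it has the right length to reach the target in $T^{\pm(\text{length})}$ steps. For the B-orbit the monodromy group is $\Alt(n)$, so I must also confirm that every intermediate origami produced along the paths indeed has $\langle h,v\rangle = \Alt(n)$ and lies in $\calH(2)$; by the classification theorem cited above, membership in the B-orbit is automatic once an origami is primitive, $n$-squared, in $\calH(2)$, and has alternating monodromy, so it suffices to keep the construction inside the orbit, which the group action guarantees.

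The crux of the argument, and the main obstacle, is \emph{disjointness}: I must show the six paths are pairwise internally vertex-disjoint (sharing only the six designated $K_{3,3}$ vertices at their endpoints). The conjugacy-class invariance does most of the work — any path consisting purely of $T$-powers keeps $h$ in a fixed conjugacy class, so two such paths living in different $h$-classes cannot collide. The delicate cases are where two paths share an $h$-class or where a crossing path re-enters a $T$-orbit already used. Following the template of the even case, I would identify precisely which $T$-orbits are reused, compute their sizes, and verify that the shared vertices are exactly the endpoints and not interior points; where a collision threatens (as with $O''$ in the even proof), I would route the offending path the ``long way'' around its $T$-orbit using powers of $T^{-1}$ rather than $T$, choosing the exponent so the single already-used vertex is avoided. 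I anticipate needing a small-$n$ base case check (here $n=7$, and possibly $n=9$) separately, since the generic orbit-length formulas may degenerate for the smallest odd $n$, just as the even proof singled out $n=4$; confirming that $n\geq 7$ suffices and that no path collapses or self-intersects for small $n$ is where the bookkeeping is tightest.
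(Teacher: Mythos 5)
Your plan matches the paper's proof essentially step for step: the paper realises $\overline{14},\overline{24}$ and $\overline{35},\overline{36}$ as single $T^{\pm1}$-moves within the two $h$-conjugacy classes, routes the five crossing paths through $S^{\pm1}$-moves, and handles exactly one long path ($\overline{25}$, which travels via $T^{-(n-3)}$ around a $T$-orbit of length $n$) using precisely the collision-avoidance trick you describe, rerouting with powers of $T^{-1}$ to dodge the one already-used vertex. What remains is only the computation you defer -- writing down the nine explicit words and their intermediate origamis -- which the paper carries out with no surprises beyond the fact that $\overline{15}$ and $\overline{16}$ need $S^{\pm2}$ rather than a single $S$-move, and no special small-$n$ case is required for $n\geq 7$.
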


\begin{proof}
Let $\overline{ij}$ denote the path in the $K_{3,3}$ minor between $O_{i}$ and $O_{j}$. We claim that the following paths realise the minor:
\begin{align*}
\overline{14} &= T, \\
\overline{15} &= S\circ T^{-1}\circ S^{2}, \\
\overline{16} &= S^{-1}\circ T\circ S^{-2}, \\
\overline{24} &= T^{-1}, \\
\overline{25} &= T\circ S^{-1}\circ T^{-(n-3)}\circ S^{-1}, \\
\overline{26} &= S, \\
\overline{34} &= S^{-1}\circ T\circ S, \\
\overline{35} &= T^{-1},\,\,\,\text{ and } \\ 
\overline{36} &= T.
\end{align*}

We need only check that these paths do connect the claimed origamis and that they are pairwise disjoint.

It is readily checked that we have
\[\overline{14} = \{O_{1},O_{4}\},\,\, \overline{24} = \{O_{2},O_{4}\},\,\, \overline{26} = \{O_{2},O_{6}\},\,\, \overline{35} = \{O_{3},O_{5}\},\]
and
\[\overline{36} = \{O_{3},O_{6}\}.\]
Similarly, we have
\[\overline{15} = \{O_{1},O',O'',O''',O_{5}\},\]
where
\[O' = ((1,2,\ldots,n),(3,n,n-1,\ldots,4)),\]
\[O'' = ((1,2,\ldots,n),(3,n-\frac{n-3}{2},n,n-1-\frac{n-3}{2},n-1,\ldots,4,4+\frac{n-3}{2}),\]
and
\[O''' = ((1,2,\ldots,n),(1,2,2+\frac{n-1}{2},3,3+\frac{n-1}{2},\ldots,n-\frac{n-1}{2},n)),\]
\[\overline{16} = \{O_{1},O^{IV},O^{V},O^{VI},O_{6}\},\]
where
\[O^{IV} = ((1,2,\ldots,n),(3,4,\ldots,n)),\]
\[O^{V} = ((1,2,\ldots,n),(3,4+\frac{n-3}{2},4,5+\frac{n-3}{2},5,\ldots,n,n-\frac{n-3}{2})),\]
and
\[O^{VI} = ((1,2,\ldots,n),(1,\frac{n+3}{2},n,n-\frac{n-1}{2},n-1,n-\frac{n+1}{2},\ldots,3+\frac{n-1}{2},3,2)),\]
and
\[\overline{34} = \{O_{3},O^{VII},O^{VIII},O_{4}\},\]
where
\[O^{VII} = ((1,2,\ldots,n),(n-2,n,n-1)),\]
and
\[O^{VIII} = ((1,2,\ldots,n),(1,n-1,n,n-2,n-3,\ldots,2)).\]

Finally, we must check the path $\overline{25}$. We have
\[S^{-1}(O_{2}) = ((1,2,\ldots,n),(1,2,\ldots,n-3,n-1,n-2,n)),\]
\[T^{-1}(O_{5}) = ((3,4,\ldots,n),(1,2,3,5,\ldots,n,4,6,\ldots,n-1)),\]
and
\[S\circ T^{-1}(O_{5}) = ((1,2,\ldots,n),(1,n-1,n-3,\ldots,4,n,n-2,\ldots,3,2)).\]
It can be checked that $S^{-1}(O_{2})$ and $S\circ T^{-1} (O_{5})$ lie in the same $T$-orbit. However, $O' = T^{-1}\circ S\circ T^{-1} (O_{5}) = T^{2}\circ S^{-1}(O_{2})$. As such, we must complete the path using powers of $T^{-1}$. The $T$-orbit of $S^{-1}(O_{2})$ is of length $n$ and so we require $T^{-(n-3)}$ to take $S^{-1}(O_{2})$ to $S\circ T^{-1} (O_{5})$. It can also be checked that no more of the origamis we have already considered are contained in this $T$-orbit and so this path is disjoint from those we have constructed above, and so we are done.
\end{proof}

The graphs in Figures~\ref{f:G3} and~\ref{f:G5B} demonstrating the planarity of $\mathcal{G}_{3}$ and $\mathcal{G}_{5}^{B}$ complete the work of this section and the proof of Theorem~\ref{t:main}. Note that we have used directed edges in the figures so that the reader can more easily check the validity of the graphs.

\begin{figure}[H]
\begin{center}
\includegraphics[scale=1]{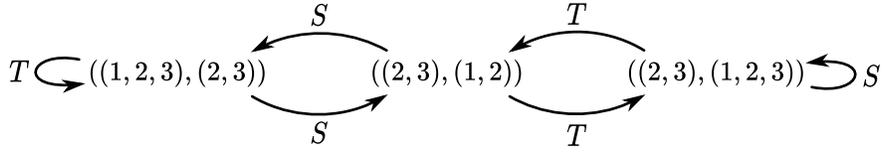}
\end{center}
\caption{The graph $\mathcal{G}_{3}$ is the undirected version of this graph.}
\label{f:G3}
\end{figure}

\begin{figure}[H]
\begin{center}
\includegraphics[scale=1]{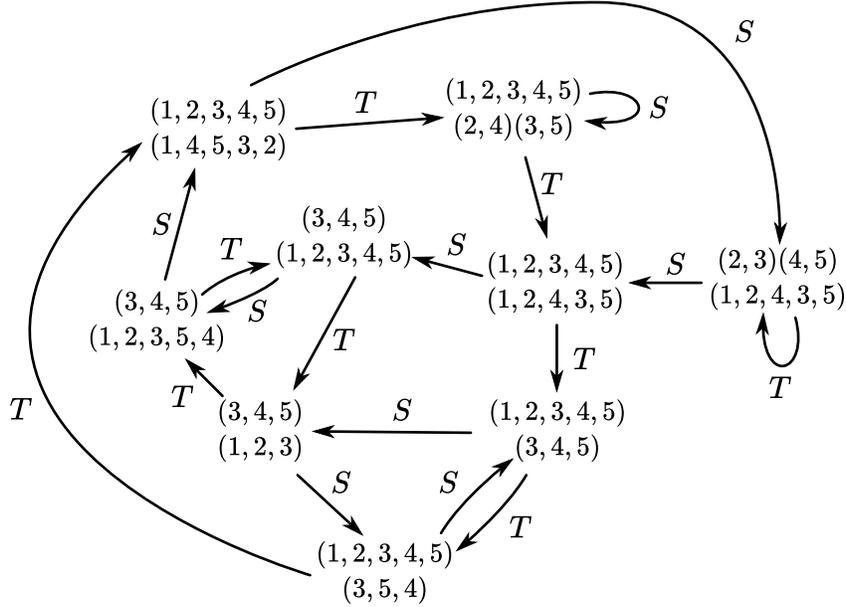}
\end{center}
\caption{The graph $\mathcal{G}_{5}^{B}$ is the undirected version of this graph.}
\label{f:G5B}
\end{figure}

\section{Further questions}

We finish with two natural questions. Firstly:

\begin{question}
Can one find generalisations of these structures that exist in the $\SL(2,\Z)$-orbits of primitive origamis in different strata?
\end{question}
We remark that the classification of these $\SL(2,\Z)$-orbits is open in general.

Secondly, the work of de Courcy-Ireland for Markoff graphs modulo $p$ gave, for certain primes $p$, a construction of a $K_{3,3}$ minor whose path lengths did not depend on the prime $p$. They called such a construction a `local' construction. In our case, this would correspond to finding a $K_{3,3}$ minor in $\mathcal{G}_{n}$ whose paths have lengths that do not depend on $n$. As such, we ask the following.

\begin{question}
Does there exist a `local' construction of a $K_{3,3}$ (or $K_{5}$) minor in the $\SL(2,\Z)$-orbits of primitive origamis in $\calH(2)$?
\end{question}


\end{document}